\newtheorem{theorem}{Theorem}
\newtheorem{lemma}[theorem]{Lemma}
\newtheorem{proposition}[theorem]{Proposition}
\theoremstyle{definition}
\theoremstyle{remark}
\numberwithin{equation}{section}
\newcommand{\R}{\mathbb{R}}
\newcommand{\cc}{\subset\subset}
\newcommand{\abs}[1]{\lvert#1\rvert}
\newcommand{\norm}[1]{\lVert#1\rVert}
\newcommand{\dx}[1]{\mathrm{d}#1}
\newcommand{\grad}{\nabla}
\DeclareMathOperator{\tr}{tr}
\newcommand{\D}{\mathcal{D}}
\newcommand{\loc}{\mathrm{loc}}
\newcommand{\gl}{\mathrm{gl}}
\newcommand{\so}{\mathrm{so}}
\newcommand{\Om}{\mathrm{O}}
\newcommand{\SO}{\mathrm{SO}}
\newcommand{\Sym}{\mathrm{Sym}}
\newcommand{\Symp}{\Sym^+}
\title[Optimal regularity for Pfaffian systems and the FTST]{Optimal regularity for two-dimensional \\Pfaffian systems and the fundamental \\theorem of surface theory}
\author{Florian Litzinger}
\address{School of Mathematical Sciences, Queen Mary University of London, London E1\,4NS, United Kingdom}
\email{f.g.litzinger@qmul.ac.uk}
\date{January 20, 2020}
\subjclass[2010]{Primary 58A17, 53A05; Secondary 35N10.}
\begin{document}
\begin{abstract}
  We prove that a Pfaffian system with coefficients in the critical space $L^2_\mathrm{loc}$ on a simply connected open subset of $\mathbb{R}^2$ has a non-trivial solution in $W^{1,2}_\mathrm{loc}$ if the coefficients are antisymmetric and satisfy a compatibility condition. As an application of this result, we show that the fundamental theorem of surface theory holds for prescribed first and second fundamental forms of optimal regularity in the classes $W^{1,2}_\mathrm{loc}$ and $L^2_\mathrm{loc}$, respectively, that satisfy a compatibility condition equivalent to the Gauss--Codazzi--Mainardi equations. Finally, we give a weak compactness theorem for surface immersions in the class $W^{2,2}_\mathrm{loc}$.
  \end{abstract}

\maketitle

\section{Introduction}
\label{sec:Introduction}

In an open set $U \subset \R^2$, we consider a Pfaffian system of the form
\begin{equation}\label{eq:PfaffSystem}
  \grad P = P \Omega,
\end{equation}
where $P$ is a matrix-valued function and $\Omega$ is a given matrix-valued one-form. Local existence of a non-trivial solution $P$ to this partial differential equation, and its regularity, manifestly depend on the regularity properties of the coefficients. It is a classical result that a twice continuously differentiable solution exists if every component $\Omega_i$ is continuously differentiable and they satisfy the compatibility condition
\begin{equation}\label{eq:Compatibility}
  \partial_i \Omega_j - \partial_j \Omega_i = \Omega_j \Omega_i - \Omega_i \Omega_j.
\end{equation}
One objective of the present paper is to show the corresponding result for solutions $P \in W^{1,2}_\loc$ and coefficients $\Omega \in L^2$ satisfying an additional structural assumption. This is the case of least possible regularity for an equation such as \cref{eq:Compatibility} to make sense in an integrated form. We then have
\begin{theorem}
  \label{thm:PfaffGlobal}
  Let $U \subset \R^2$ be a connected and simply connected open set and let $\Omega \in L^2(U, \so(m) \otimes \wedge^1 \R^2)$ satisfy the compatibility condition, \cref{eq:Compatibility}, in the distributional sense. Then there exists $P \in W^{1,2}_\loc(U, \SO(m))$ such that $\grad P = P \Omega$ in $U$.
  Moreover, any two such solutions $P_0$, $P_1$ are related by $P_0 = C P_1$ with a constant $C \in \SO(m)$.
\end{theorem}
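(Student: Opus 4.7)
The plan is to first establish local existence on small disks by approximation, then to patch local solutions into a global one using the simply connected hypothesis, and finally to verify uniqueness by a direct product-rule computation.

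For local existence on a ball $B \cc U$, I would mollify $\Omega$ to obtain smooth $\Omega_\epsilon \in C^\infty(B, \so(m) \otimes \wedge^1\R^2)$ converging to $\Omega$ in $L^2(B)$. These generally fail to satisfy the compatibility condition \cref{eq:Compatibility} classically, but the defect is a $2$-form of controlled $L^1$ size that is $d$-closed modulo a vanishing commutator error; since $B$ has trivial first de~Rham cohomology, one expects to correct $\Omega_\epsilon$ by a small $\so(m)$-valued exact one-form (via a Poincaré-lemma construction combined with a fixed-point iteration to absorb the nonlinear term $\Omega_\epsilon \wedge \Omega_\epsilon$) so that exact compatibility is restored while $L^2$-convergence is preserved. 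Classical Frobenius theory then produces smooth $P_\epsilon \colon B \to \SO(m)$ with $P_\epsilon(x_0) = I$ and $\grad P_\epsilon = P_\epsilon \Omega_\epsilon$, staying in $\SO(m)$ because $\Omega_\epsilon$ is antisymmetric. Since $P_\epsilon$ is uniformly bounded and $\grad P_\epsilon = P_\epsilon \Omega_\epsilon$ is uniformly bounded in $L^2(B)$, a subsequence converges weakly in $W^{1,2}(B)$, strongly in every $L^p(B)$ with $p<\infty$, and almost everywhere to some $P \in \SO(m)$ a.e. Combining strong $L^p$-convergence of $P_\epsilon$ with $L^2$-convergence of $\Omega_\epsilon$ yields $P_\epsilon\Omega_\epsilon \to P\Omega$ in $L^1(B)$, and hence $\grad P = P\Omega$ distributionally.

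For the global statement, I would cover $U$ by small balls on which the local step applies. The local uniqueness (computed next) shows any two solutions on a connected overlap differ by a constant in $\SO(m)$, so simply connectedness of $U$ allows these constants to be chosen consistently along paths without monodromy obstruction, producing a globally defined $P \in W^{1,2}_\loc(U, \SO(m))$. For uniqueness itself, given $P_0, P_1 \in W^{1,2}_\loc(U, \SO(m))$ both satisfying $\grad P = P\Omega$, the product $C := P_0 P_1^T$ lies in $W^{1,2}_\loc \cap L^\infty$, and the product rule together with $(\grad P_1)^T = \Omega^T P_1^T = -\Omega P_1^T$ (using $\Omega^T = -\Omega$) gives
\begin{equation*}
  \grad C = (\grad P_0) P_1^T + P_0 (\grad P_1)^T = P_0\Omega P_1^T - P_0\Omega P_1^T = 0
\end{equation*}
almost everywhere, so $C$ is constant on the connected set $U$.

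The hard part is the correction step in the local existence: modifying the mollified $\Omega_\epsilon$ to satisfy \cref{eq:Compatibility} exactly while retaining $L^2$-convergence. The nonlinearity $\Omega \wedge \Omega$ only lies in $L^1$, so the compatibility condition is just barely definable, and it is precisely the antisymmetric structure of $\Omega$ that permits Hodge/Poincaré type arguments and a contractive fixed-point scheme to close at the critical exponent. This antisymmetry is indispensable throughout — to keep $P_\epsilon$ bounded in $\SO(m)$, to give meaning to the bilinear term in \cref{eq:Compatibility} distributionally, and to make the uniqueness cancellation work.
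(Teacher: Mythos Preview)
Your global patching via simple connectedness and your uniqueness computation are essentially what the paper does (it computes $\nabla(P_1^{-1}P_0)=0$ rather than $\nabla(P_0 P_1^T)=0$, but the idea is identical).

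The local existence step, however, has a genuine gap---and you rightly flag it as the hard part, but the sketch you give does not close. The mollified curvature defect
\[
F_\epsilon := d\Omega_\epsilon + \Omega_\epsilon \wedge \Omega_\epsilon = \Omega_\epsilon \wedge \Omega_\epsilon - (\Omega \wedge \Omega)_\epsilon
\]
is small only in $L^1(B)$. If you correct by an \emph{exact} one-form $d\eta$ as you say, then $d(d\eta)=0$ and the curvature changes only through the commutator $[\Omega_\epsilon, d\eta]$ and the quadratic term $d\eta\wedge d\eta$; there is no reason that operator can be inverted to absorb an arbitrary $L^1$ two-form. If instead you mean to solve $d\alpha = -F_\epsilon$ via the Poincar\'e lemma (so $\alpha$ is not exact), then $\alpha$ is controlled only in $W^{1,1}$, and in two dimensions $W^{1,1}\not\hookrightarrow L^2$: this is exactly the critical failure that distinguishes $p=2$ from $p>2$. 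Your ``contractive fixed-point scheme'' cannot close here without a smallness assumption on $\Omega$, because at the critical exponent there is no Sobolev gain to iterate on, and you do not specify any concrete compensated-compactness mechanism that would replace it. (Also, a side remark: antisymmetry is not what gives distributional meaning to \cref{eq:Compatibility}; the right-hand side is in $L^1$ regardless. And the claim that $F_\epsilon$ is ``$d$-closed modulo error'' is vacuous, since every $2$-form in $\R^2$ is $d$-closed.)

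The paper takes a completely different route. It does not approximate $\Omega$ at all; instead it invokes Rivi\`ere's gauge lemma (\cref{lem:Riviere}), which for any $\Omega\in L^2(B,\so(m)\otimes\wedge^1\R^2)$ directly produces $P\in W^{1,2}(B,\SO(m))$ and $\xi\in W^{1,2}_0(B,\so(m))$ with $P^{-1}\nabla P + P^{-1}\Omega P = \nabla^\perp\xi$. The compatibility condition then forces $\Delta\xi = (\partial_2\xi)(\partial_1\xi)-(\partial_1\xi)(\partial_2\xi)$ with zero boundary values; a Hopf-differential argument (showing $\langle\xi_z,\xi_z\rangle$ is holomorphic and in $L^1(\C)$ after conformal reflection, hence zero) combined with Hartman--Wintner unique continuation yields $\xi\equiv 0$, whence $\nabla P + \Omega P = 0$. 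The antisymmetry hypothesis enters crucially through Rivi\`ere's lemma, which is the deep, non-elementary input your approximation scheme lacks.
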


Over the years, there have been several incremental improvements to the classical theory. In particular, Hartman and Wintner \cite{hartman1950fundamental} showed that the above existence result holds if the given form $\Omega$ is continuous, with a continuously differentiable solution $P$. Following this, Mardare \cites{mardare2003fundamental,mardare2005pfaff} first showed the existence of a solution $P$ to \cref{eq:PfaffSystem} in the Sobolev class $W^{1,\infty}_\loc$ for locally essentially bounded coefficients and later improved the theorem to hold in the class $W^{1,p}_\loc$ for $\Omega \in L^p_\loc$, where $p>2$. It is important to note that without any further structural assumptions on the coefficients $\Omega$, this result has been demonstrated to be optimal \cite{mardare2005pfaff}. However, once one supposes that the components of the matrix-valued one-form $\Omega$ be \emph{antisymmetric}, it is possible to improve the regularity to the critical case that is \cref{thm:PfaffGlobal}.

Meanwhile, there have been developments in the theory of non-linear PDE that attempt to exploit a particular structure of the equation in order to gain additional regularity of the solution beyond what would usually be expected; and these compensated compactness methods \cites{coifman1993compensated,riviere2007conservation,wente1969existence} have been markedly successful in that regard. In particular, in his 2007 paper, Rivière \cite{riviere2007conservation} provided a proof of the regularity of two-dimensional weakly harmonic maps, from which we recall an important intermediate result:
\begin{lemma}[Rivière \cite{riviere2007conservation}, Lemma A.3; Schikorra \cite{schikorra2010remark}]   \label{lem:Riviere}
  Let $U \subset \R^2$ be a contractible bounded regular domain and let $\Omega \in L^2(U, \so(m) \otimes \wedge^1 \R^2)$. Then there exist $\xi \in W_0^{1,2}(U, \so(m))$ and $P \in W^{1,2}(U, \SO(m))$ such that
  \begin{align}
    P^{-1} \grad P + P^{-1} \Omega P &= \grad^\perp \xi, \\
    \norm{\grad \xi}^2_{L^2} + \norm{\grad P}^2_{L^2} &\leq C(m) \norm{\Omega}^2_{L^2}.
  \end{align}
\end{lemma}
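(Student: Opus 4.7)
My plan is to construct $(P, \xi)$ by a direct variational argument: minimise the functional
\[
F(P) := \int_U \abs{P^{-1}\grad P + P^{-1}\Omega P}^2 \dx{x}
\]
over $W^{1,2}(U, \SO(m))$. Since $F(I) = \norm{\Omega}_{L^2}^2 < \infty$, the infimum is finite. Along a minimising sequence $P_n$, the orthogonality $P_n^T P_n = I$ yields the pointwise identity $\abs{P_n^{-1}\grad P_n + P_n^{-1}\Omega P_n} = \abs{\grad P_n + \Omega P_n}$, and the estimate $\abs{\grad P_n}^2 \leq 2\abs{\grad P_n + \Omega P_n}^2 + 2\abs{\Omega}^2$ then gives a uniform $W^{1,2}$ bound. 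By Rellich--Kondrachov, a subsequence converges strongly in every $L^q$, $q < \infty$, and pointwise a.e.\ to some $P$, which therefore lies in $\SO(m)$ a.e. Lower semicontinuity of $F$ follows by expanding the integrand as $\abs{\grad P_n}^2 + 2\langle \grad P_n, \Omega P_n\rangle + \abs{\Omega P_n}^2$: the first term is weakly lower semicontinuous, the last is constant in $n$ (orthogonal invariance), and the cross term passes to the limit because $\Omega P_n \to \Omega P$ strongly in $L^2$ (dominated convergence, using the uniform $L^\infty$ bound and pointwise a.e.\ convergence of $P_n$) is paired against the weak $L^2$ limit $\grad P_n \rightharpoonup \grad P$.

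Next, I derive the Euler--Lagrange equation by varying $P_\epsilon := P\exp(\epsilon X)$ with $X \in C^\infty(\overline{U}, \so(m))$. Setting $A := P^{-1}\grad P + P^{-1}\Omega P \in L^2(U, \so(m)\otimes\wedge^1\R^2)$, a direct computation gives
\[
\tfrac{d}{d\epsilon}\big|_{\epsilon=0}\bigl(P_\epsilon^{-1}\grad P_\epsilon + P_\epsilon^{-1}\Omega P_\epsilon\bigr) = [A, X] + \grad X.
\]
Because each component of $A$ takes values in $\so(m)$, the trace identity $\tr(A^T(AX - XA)) = -\tr(A^2 X) + \tr(AXA) = 0$ eliminates the commutator term pointwise, and stationarity collapses to $\int_U \langle A, \grad X\rangle \dx{x} = 0$ for every admissible $X$. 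This is exactly the Neumann problem $d^* A = 0$ in $U$ together with $A\cdot\nu = 0$ on $\partial U$. Since $U$ is simply connected, the co-closed one-form $A$ admits a primitive $\xi \in W^{1,2}(U, \so(m))$ with $A = \grad^\perp\xi$; the Neumann condition translates into $\partial_\tau \xi = 0$ on $\partial U$, and because $\partial U$ is connected we may normalise $\xi$ to vanish on the boundary, placing it in $W_0^{1,2}(U, \so(m))$.

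The estimates are then immediate: $\norm{\grad \xi}_{L^2}^2 = \norm{A}_{L^2}^2 = F(P) \leq F(I) = \norm{\Omega}_{L^2}^2$, and rearranging the defining relation to $\grad P = PA - \Omega P$ combined with the orthogonality of $P$ produces $\norm{\grad P}_{L^2} \leq \norm{A}_{L^2} + \norm{\Omega}_{L^2} \leq 2\norm{\Omega}_{L^2}$, so the combined bound holds with $C(m) = 5$. I expect the main obstacle to be the existence step: one must simultaneously preserve the non-convex $\SO(m)$-constraint in the weak $W^{1,2}$ limit and handle the nonlinear coupling between $\grad P$ and the lower-order term $\Omega P$ in the functional, both of which hinge crucially on two-dimensional Rellich--Kondrachov compactness. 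A notable feature of the direct-method approach is that no smallness condition on $\norm{\Omega}_{L^2}$ is required, which is precisely Schikorra's extension of Rivière's original small-energy lemma.
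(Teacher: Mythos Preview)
The paper does not supply its own proof of this lemma; it is quoted from the literature, with the explicit remark that Rivi\`ere's original argument is involved while Schikorra \cite{schikorra2010remark} gave an alternative proof by variational methods that dispenses with the smallness hypothesis on $\Omega$. Your proposal is exactly Schikorra's direct-method argument, and the main steps are correct: existence of a minimiser via Rellich--Kondrachov and the pointwise closedness of $\SO(m)$, lower semicontinuity by the expansion you give, the vanishing of the commutator contribution in the first variation through the cyclic trace identity for $\so(m)$-valued components, and the Hodge step from the weak Neumann condition $\int_U\langle A,\nabla X\rangle=0$ for all $X\in C^\infty(\overline{U},\so(m))$ to $A=\nabla^\perp\xi$ with $\xi\in W^{1,2}_0$.

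The only place that merits an extra line of justification is the boundary conclusion. Since $\xi$ is a priori only in $W^{1,2}$, one argues as follows: testing against $X\in\mathcal{D}(U)$ gives $\operatorname{div}A=0$ in $\mathcal{D}'(U)$, hence $A=\nabla^\perp\xi$ for some $\xi\in W^{1,2}(U,\so(m))$ by the weak Poincar\'e lemma on the simply connected $U$; then returning to arbitrary $X\in C^\infty(\overline{U})$ and using the identity $\int_U\det(\nabla\xi,\nabla X)=\int_{\partial U}\xi\,\partial_\tau X$, valid by density for $\xi\in W^{1,2}$ on a regular domain, forces the $H^{1/2}$-trace of $\xi$ to be constant on the connected boundary $\partial U$, and subtracting that constant places $\xi$ in $W^{1,2}_0$. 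With this point made explicit your sketch is complete.
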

Thanks to the Riemann mapping theorem, \cref{lem:Riviere} also holds true if $U \subset \R^2$ is an open, connected, and simply connected bounded set with sufficiently smooth boundary.
While the techniques employed in the original proof \cite{riviere2007conservation} are quite involved, Schikorra \cite{schikorra2010remark} gave an alternative proof using variational methods, which in addition removes the need for a smallness condition on $\Omega$.

The above result is of particular interest to us because the given form $\Omega$ is only assumed to be square-integrable. In order to achieve existence and regularity of the solution $P \in W^{1,2}$, the additional structure assumed, that is, the antisymmetry of each $\Omega_i$, is utilized in a crucial way. In the same vein, it is this additional structural assumption that enables us to employ Rivière's lemma to extend the previous results on the solvability of the above Pfaffian system in \cref{eq:PfaffSystem} to the critical $p=2$ case.

The possibility of finding a solution to this Pfaffian system, in turn, has been an essential ingredient in the proof of weak versions of the fundamental theorem of surface theory. As for Pfaffian systems, there have been incremental improvements to this classical geometric result. The theorem answers the question of whether it is possible to find an immersion of a surface in three-dimensional space with prescribed first and second fundamental forms---this turns out to be true if, and only if, the fundamental forms satisfy the Gauss--Codazzi--Mainardi equations. We obtain the following
\begin{theorem}\label{thm:FTST}
  Let $U$ be a connected and simply connected open subset of $\R^2$ and let $(a_{ij}) \in W^{1,2}_\loc(U, \Symp(2)) \cap L^\infty_\loc(U, \Symp(2))$ and $(b_{ij}) \in L^2_\loc(U, \Sym(2))$ be given.
  Suppose that the eigenvalues of $(a_{ij})$ are locally uniformly bounded from below and that the matrix fields $(a_{ij})$, $(b_{ij})$ are such that
  \begin{equation}
    \partial_1 \Omega_2 - \partial_2 \Omega_1 = \Omega_2 \Omega_1 - \Omega_1 \Omega_2,
  \end{equation}
  where $\Omega \in L^2_\loc(U, \so(3) \otimes \wedge^1 \R^2)$ is given by the following sequence of definitions, see also \cref{sub:RegularityCoefficients}:
  \begin{align*}
    (a^{i j}) &= \frac{1}{a_{11}a_{22} - a_{12}a_{21}} \begin{pmatrix}
      a_{22} & -a_{12} \\ -a_{21} & a_{11}
    \end{pmatrix}, \\
    b_ i^j &= a^{j k} b_{i k}, \\
    \Gamma^k_{i j} &= \frac{1}{2} a^{k \ell} (\partial_j a_{i \ell} + \partial_ i a_{j \ell} - \partial_ \ell a_{i j}), \\
    G &= \begin{pmatrix}
      a_{11} & a_{12} & 0 \\ a_{21} & a_{22} & 0 \\ 0 & 0 & 1
    \end{pmatrix}^\frac{1}{2}, \\
    \Gamma_ i &= \begin{pmatrix}
      \Gamma_{i 1}^1 & \Gamma_{i 2}^1 & -b_ i^1 \\
      \Gamma_{i 1}^2 & \Gamma_{i 2}^2 & -b_ i^2 \\
      b_{i 1} & b_{i 2} & 0
    \end{pmatrix}, \\
    \Omega_ i &= (G \Gamma_ i - \partial_ i G) G^{-1}.
  \end{align*}

  Then there exists an immersion $\theta \in W^{2,2}_\loc(U, \R^3)$ such that
  \begin{alignat}{2}
    a_{ij} &= \partial_i \theta \cdot \partial_j \theta \qquad &&\text{in $W^{1,2}_\loc(U)$}, \\
    b_{ij} &= \partial_{ij} \theta \cdot \frac{\partial_1 \theta \times \partial_2 \theta}{\abs{\partial_1 \theta \times \partial_2 \theta}} \qquad &&\text{in $L^2_\loc(U)$}.
  \end{alignat}
  Moreover, the map $\theta$ is unique in $W^{2,2}_\loc(U, \R^3)$ up to proper isometries of $\R^3$.
\end{theorem}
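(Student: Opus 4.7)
The plan is to reconstruct $\theta$ by the classical moving-frame method, using \cref{thm:PfaffGlobal} to supply an adapted orthonormal frame at the critical regularity. The first task is to verify the hypotheses of \cref{thm:PfaffGlobal} for $\Omega$. The compatibility condition is given by assumption; the regularity $\Omega\in L^2_{\loc}(U,\so(3)\otimes\wedge^1\R^2)$ follows because $(a_{ij})\in W^{1,2}_{\loc}\cap L^\infty_{\loc}$ with eigenvalues locally uniformly bounded below forces $(a^{ij})$, $G$ and $G^{-1}$ to lie in $W^{1,2}_{\loc}\cap L^\infty_{\loc}$, whence $\Gamma_i$, $\partial_i G$ and $\Omega_i$ all sit in $L^2_{\loc}$. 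Antisymmetry of $\Omega_i$ is the essential algebraic check: conjugating $\Omega_i + \Omega_i^T$ by $G$ reduces the problem to the identity
\[
  G^2\Gamma_i + \Gamma_i^T G^2 = \partial_i G^2,
\]
which I would verify block-by-block---the upper-left $2\times 2$ block is the Koszul identity $\Gamma_{ij,k}+\Gamma_{ik,j}=\partial_i a_{jk}$ with $\Gamma_{ij,k}:=a_{k\ell}\Gamma_{ij}^\ell$, the mixed blocks cancel via $a_{jk}b_i^k = b_{ij}$, and the bottom-right entry is trivially zero.

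Next, \cref{thm:PfaffGlobal} produces $P\in W^{1,2}_{\loc}(U,\SO(3))$ with $\grad P = P\Omega$. Setting $F:=PG$, a direct calculation using the definition of $\Omega_i$ yields $\partial_i F = F\Gamma_i$ in $L^2_{\loc}$, which in column form $F=(g_1\mid g_2\mid g_3)$ is exactly the Gauss--Weingarten system $\partial_i g_j = \Gamma_{ij}^k g_k + b_{ij}g_3$ for $j=1,2$ together with $\partial_i g_3 = -b_i^k g_k$. Symmetry of $\Gamma_{ij}^k$ and $b_{ij}$ in $i,j$ then forces $\partial_1 g_2 = \partial_2 g_1$ distributionally, so Poincar\'e's lemma on the simply connected $U$ produces $\theta\in W^{2,2}_{\loc}(U,\R^3)$ with $\partial_i\theta = g_i$ for $i=1,2$. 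From $P\in\SO(3)$ we have $F^T F = G^2$, so the upper $2\times 2$ block gives $\partial_i\theta\cdot\partial_j\theta = a_{ij}$ and the remainder makes $g_3$ a unit normal; $\det F = \det G > 0$ fixes the sign $g_3 = (\partial_1\theta\times\partial_2\theta)/\abs{\partial_1\theta\times\partial_2\theta}$. Dotting the Gauss formula with $g_3$ then delivers $\partial_{ij}\theta\cdot g_3 = b_{ij}$ in $L^2_{\loc}$. Uniqueness is inherited from \cref{thm:PfaffGlobal}: two candidate immersions produce $\SO(3)$-valued frames $P_0,P_1$ solving the same Pfaffian system, hence $P_0 = CP_1$ for some constant $C\in\SO(3)$, which integrates to $\theta_0 = C\theta_1 + t$ for some $t\in\R^3$.

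The main obstacle I anticipate is keeping every step honest under the low regularity. Establishing the algebraic identity in an almost-everywhere sense, differentiating $F = PG$ in $W^{1,2}_{\loc}$ to extract $\partial_i F = F\Gamma_i$, and applying the Poincar\'e lemma to the weak identity $\partial_1 g_2 = \partial_2 g_1$ all depend on carefully applied product and chain rules for $W^{1,2}\cdot L^\infty$ pairings. The assumption $(a_{ij})\in W^{1,2}_{\loc}\cap L^\infty_{\loc}$ with uniformly positive eigenvalues is exactly what keeps matrix inversion and matrix square roots within this class, so that the full chain of weak identities survives all the way to the final identification of the two fundamental forms.
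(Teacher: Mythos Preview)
Your proposal is correct and follows essentially the same route as the paper: verify $\Omega\in L^2_{\loc}(U,\so(3)\otimes\wedge^1\R^2)$ via the identity $G^2\Gamma_i+\Gamma_i^T G^2=\partial_i G^2$, invoke \cref{thm:PfaffGlobal} for $P$, check the curl-free condition on the first two columns of $F=PG$ using the symmetry of $\Gamma_{ij}^k$ and $b_{ij}$, apply the weak Poincar\'e lemma (\cref{lem:Poincare}) to produce $\theta$, and read off the fundamental forms from $F^T F=G^2$ and $\partial_{ij}\theta=F(\Gamma_j)_{(i)}$. The only cosmetic difference is that the paper names the columns of $G$ as $g_i$ and works with $Pg_i$, whereas you name the columns of $F$ directly; your use of $\det F=\det G>0$ to fix the orientation of the normal is in fact slightly cleaner than the paper's appeal to positive definiteness of $F$.
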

We remark that the compatibility condition assumed in the theorem is in fact equivalent to the Gauss--Codazzi--Mainardi equations, see \cref{prop:Equivalence}. As for the Pfaffian system mentioned above, one needs to consider the compatibility equations in the distributional sense.

In the works mentioned above \cites{hartman1950fundamental,mardare2003fundamental,mardare2005pfaff}, the fundamental theorem of surface theory has been extended to hold true for, finally, first and second fundamental forms in the classes $W^{1,p}_\loc$ and $L^p_\loc$, respectively, where $p>2$.
The method of proof, whose lines we also follow in this paper, is the following: First, a Pfaffian system as in \cref{eq:PfaffSystem} is solved for a proper orthogonal matrix field $P$, and then the sought-after surface immersion is found by means of a weak version of the Poincaré lemma, solving the equation $\grad \theta = PG$, where $G$ is the matrix square root of the three-dimensional extension of the given metric. Since the Poincaré lemma is known to hold for all $p \geq 1$ (see \cref{lem:Poincare}), the premier challenge in extending the fundamental theorem of surface theory to the critical exponent $p=2$ lies in the extension of the corresponding existence theorem on Pfaffian systems.

Therefore, in order to be able to apply \cref{thm:PfaffGlobal}, an appropriate \emph{antisymmetric} matrix-valued one-form $\Omega$ of coefficients of the Pfaffian system has to be constructed as above. While the connection form $\Gamma$ does not possess this property in an arbitrary frame, it is known to be antisymmetric in an orthonormal frame. This approach to the fundamental theorem of surface theory, via an antisymmetric field of coefficients, has previously been introduced by Ciarlet, Gratie, and C.~Mardare \cite{ciarlet2008new}, who identified the solution $P$ of the Pfaffian system as the rotation field appearing in the polar factorization of the gradient of the three-dimensional extension of the immersion $\theta$.

As a consequence of our approach, we finally obtain a weak rigidity of the compatibility equation and a weak compactness theorem for surface immersions in the class $W^{2,2}_\loc$. \begin{theorem}\label{thm:Compactness}
  Let $\{\theta^k\} \subset W^{2,2}_\loc(U, \R^3)$ be a uniformly bounded sequence of immersions with corresponding sequences of first and second fundamental forms denoted by $\{(a_{ij})^k\}$ and $\{(b_{ij})^k\}$, respectively.
  Suppose that $\partial_i \theta^k \in W^{1,2}_\loc \cap L^\infty_\loc$ and that the first fundamental forms $(a_{ij})^k$, $a_{ij}^k = \partial_i \theta^k \cdot \partial_j \theta^k$, have eigenvalues bounded from below uniformly in the domain $U$ and in $k$.
  Then, after passing to subsequences, $\{\theta^k\}$ converges weakly in $W^{2,2}_\loc$ to an immersion $\theta \in W^{2,2}_\loc(U, \R^3)$, whose first and second fundamental forms $(a_{ij})$, $(b_{ij})$ are limit points of the sequences $\{(a_{ij})^k\}$, $\{(b_{ij})^k\}$ in the $W^{1,2}_\loc$- and $L^2_\loc$-topologies, respectively.
\end{theorem}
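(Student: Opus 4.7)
The plan is to combine reflexivity of $W^{2,2}_\loc$, Rellich--Kondrachov compactness in two dimensions, and a weak-times-strong argument to identify the limits of the bilinear expressions defining the two fundamental forms; no further PDE input beyond product rules will be needed. First, since $\{\theta^k\}$ is uniformly bounded in $W^{2,2}_\loc$, a diagonal extraction over an exhaustion $U_\ell \cc U_{\ell+1} \cc U$ yields (without relabelling) $\theta^k \rightharpoonup \theta$ weakly in $W^{2,2}_\loc(U,\R^3)$ for some $\theta$; Rellich--Kondrachov in dimension two gives $\theta^k \to \theta$ strongly in $W^{1,p}_\loc$ for every $p < \infty$ and uniformly on compacts. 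Interpolating the $L^2_\loc$ convergence of $\partial_i\theta^k$ against the uniform $L^\infty_\loc$ hypothesis strengthens this to $\partial_i\theta^k \to \partial_i\theta$ in $L^p_\loc$ for every $p < \infty$, and along a further subsequence pointwise almost everywhere.

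Next I would handle the first fundamental forms. The above convergences give $a_{ij}^k \to \partial_i\theta \cdot \partial_j\theta =: a_{ij}$ in $L^p_\loc$ for every $p<\infty$, and the uniform pointwise eigenvalue lower bound transfers to $(a_{ij})$ on a full-measure set, so $\theta$ is an immersion. The product rule decomposes $\partial_\ell a_{ij}^k$ into sums of products of a weakly $L^2_\loc$ convergent factor ($\partial_{i\ell}\theta^k$) and an a.e.-convergent, uniformly $L^\infty_\loc$ bounded one ($\partial_j\theta^k$); a routine dominated-convergence computation identifies the weak $L^2_\loc$ limit of each such product with the expected candidate, and combined with the $L^p_\loc$ convergence of $a_{ij}^k$ this yields $a_{ij}^k \rightharpoonup a_{ij}$ weakly in $W^{1,2}_\loc$.

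For the unit normals and second fundamental forms, the eigenvalue lower bound implies $\abs{\partial_1\theta^k \times \partial_2\theta^k} \geq c > 0$ on every compact subset of $U$, so the normals $n^k$ are well-defined, satisfy $\abs{n^k} = 1$, and converge almost everywhere (and therefore in every $L^p_\loc$, by dominated convergence) to the unit normal $n$ of $\theta$. The same weak-times-strong mechanism applied to $b_{ij}^k = \partial_{ij}\theta^k \cdot n^k$, with $\partial_{ij}\theta^k \rightharpoonup \partial_{ij}\theta$ in $L^2_\loc$ and $n^k \to n$ a.e., gives $b_{ij}^k \rightharpoonup \partial_{ij}\theta \cdot n =: b_{ij}$ weakly in $L^2_\loc$, identifying $(b_{ij})$ as the second fundamental form of $\theta$.

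The main obstacle is precisely this identification of weak $L^2_\loc$ limits of bilinear expressions involving the highest-order factors $\partial_{ij}\theta^k$: weak convergence is not stable under products of weakly convergent sequences, so the whole argument depends on being able to place every lower-order factor into the strong slot of the weak-times-strong lemma. This is where the uniform $L^\infty_\loc$ hypothesis on $\partial_i\theta^k$ and the uniform lower bound on the eigenvalues of $(a_{ij})^k$ enter essentially; without them one could at best extract Young-measure limits rather than pointwise ones, and the theorem would fail.
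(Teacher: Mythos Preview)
Your argument is correct and is genuinely different from the paper's. You extract the weak $W^{2,2}_\loc$ limit $\theta$ first and then identify its fundamental forms directly via weak-times-strong products, using only Rellich--Kondrachov and the elementary lemma that if $f_k \rightharpoonup f$ in $L^2_\loc$ and $g_k \to g$ boundedly almost everywhere then $f_k g_k \rightharpoonup fg$ in $L^2_\loc$. The paper instead passes through the antisymmetric connection forms $\Omega^k_i$ built from $(a_{ij})^k$, $(b_{ij})^k$: it shows $\Omega^k \rightharpoonup \Omega$ in $L^2_\loc$, invokes a separate compactness lemma for the Pfaffian system $\nabla P^k = P^k \Omega^k$ to produce $\nabla P = P\Omega$, reads off from $\partial_i\partial_j P = \partial_j\partial_i P$ that the limit $\Omega$ still satisfies the compatibility condition, and then reconstructs an immersion with fundamental forms $(a_{ij})$, $(b_{ij})$ via \cref{thm:FTST}, finally matching it with the weak limit of $\theta^k$. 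Your route is considerably more elementary and self-contained; the paper's detour is what demonstrates the weak stability of the compatibility equation (the ``weak rigidity'' advertised in the introduction), which is really the paper's motivation for stating the theorem at all, but is not logically needed to prove the compactness statement as written.
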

In the context of immersions of Riemannian manifolds, results in this spirit already appeared in a recent work by Chen and Li \cite{chen2018global}. Moreover, sequences of weak immersions have previously been investigated without any assumptions about the first fundamental form, supposing instead a uniform bound on the $L^2$-norm of the second fundamental form---see the paper of Laurain and Rivière \cite{laurain2018optimal} and the references therein.

This paper is structured as follows: In \cref{sec:Notation} we introduce the notation that is used throughout this article.
After that, we prove in \cref{sec:Pfaffian systems} that the optimal existence theorem for Pfaffian systems is a consequence of the aforementioned lemma of Rivière. We also extend the theorem from the unit disk to arbitrary simply connected open subsets of $\R^2$.
Thereafter, in \cref{sec:Application to FTST}, we apply this result to the optimal regularity case of the fundamental theorem of surface theory, mostly following along the lines of previous approaches \cites{ciarlet2008new,mardare2005pfaff}.
In \cref{sec:Compactness theorem}, finally, we conclude the paper by demonstrating the weak compactness of $W^{2,2}_\loc$-immersions.

\subsection*{Acknowledgements}
The author would like to express his gratitude to his supervisor Huy The Nguyen for suggesting the problem and his constant support, Tristan Rivière for pointing out references to previous results on weak compactness of $W^{2,2}$-immersions, Ben Sharp for providing a crucial argument in the proof of \cref{prop:Pfaff}, and Gianmichele Di Matteo for many interesting discussions.

\section{Notations and Preliminaries}
\label{sec:Notation}

Throughout this paper, let $U$ be an open, connected and simply connected subset of $\R^2$. A continuously differentiable mapping $\theta: U \to \R^3$ is called an immersion if the vectors $\partial_i \theta(y)$, $i=1,2$, are linearly independent for all $y \in U$.

We denote the set of real matrices of size $n \times n$ by $\gl(n)$, the set of symmetric matrices by $\Sym(n)$, the set of symmetric positive definite matrices by $\Symp(n)$, the set of antisymmetric matrices by $\so(n)$, the set of orthogonal matrices by $\Om(n)$, and the set of proper orthogonal matrices by $\SO(n)$.
We write the space of $\so(n)$-valued one-forms on $\R^2$ as $\so(n) \otimes \wedge^1 \R^2$. The components of $\Omega \in \so(n) \otimes \wedge^1 \R^2$ are denoted by $\Omega_i$, $i=1,2$, such that $\Omega_i \in \so(n)$.

Moreover, we denote the elements of a matrix $A \in \gl(n)$ by $a_{ij}$, $i,j=1,\dots,n$, such that $A = (a_{ij})$, and the $j$-th column of $A$ is denoted by $A_{(j)} = a_j$. The inverse $A^{-1}$ of $A$ is denoted by $(a^{ij})$ and the transpose of $A$ by $A^T = (a_{ji})$.
We enumerate the real eigenvalues of $A \in \Sym(n)$ as $\lambda_1(A) \leq \dots \leq \lambda_n(A)$ and with any $A \in \Symp(n)$ we associate the unique matrix square root $A^{\frac{1}{2}}$.

We write $\D(U)$ for the space of smooth functions with compact support contained in $U$ and $\D'(U)$ for the space of distributions over $U$.
As usual, we denote the Lebesgue spaces by $L^p(U)$, $1 \leq p \leq \infty$, and the Sobolev spaces of (equivalence classes of) weakly differentiable functions by $W^{k,p}(U)$, $k=0,1,\dots$, $1 \leq p \leq \infty$. The closure of $\D(U)$ in $W^{1,2}(U)$ is denoted by $W^{1,2}_0(U)$. Furthermore, we write
\begin{equation*}
  W^{k,p}_\loc(U) = \{ T \in \D'(U) : T \in W^{k,p}(V) \text{ for all open sets } V \cc U \}.
\end{equation*}
Whenever $X$ is a finite-dimensional space, let $\D(U, X)$, $L^p(U, X)$, and $W^{k,p}(U, X)$ designate the spaces of $X$-valued objects whose components belong to $\D(U)$, $L^p(U)$, and $W^{k,p}(U)$, respectively. We shall omit the additional symbol if it is implied by the context.

We note that the space $W^{1,2}(B) \cap L^\infty(B)$ is an algebra for all open balls $B \cc U$, so that $fg \in W^{1,2}_\loc(U) \cap L^\infty_\loc(U)$ whenever $f,g \in W^{1,2}_\loc(U) \cap L^\infty_\loc(U)$.

For later use, we recall the following weak version of the Poincaré lemma.

\begin{lemma}[Mardare \cite{mardare2007systems}, Theorem 6.5]\label{lem:Poincare}
  Let $U$ be a connected and simply connected open subset of $\R^2$ and let $p \geq 1$. Let $f_i \in L^p_\loc(U)$, $i=1,2$, be functions that satisfy
  \begin{equation}
    \partial_1 f_2 = \partial_2 f_1 \qquad \text{in $\D'(U)$}.
  \end{equation}
  Then there exists a function $\theta \in W_\loc^{1,p}(U)$, unique up to an additive constant, such that
  \begin{equation}
    \partial_i \theta = f_i \qquad \text{in $L^p_\loc(U)$}.
  \end{equation}
\end{lemma}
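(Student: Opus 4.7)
My plan is to exhaust $U$ by nice simply connected subdomains, solve the equation on each by mollification, and patch the local primitives together using a weak rigidity (vanishing gradient forces constancy on connected sets).

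First I would fix an exhaustion $U_1 \cc U_2 \cc \dots$ of $U$ by relatively compact, connected, simply connected open sets with smooth boundary, with $\bigcup_n U_n = U$; such an exhaustion exists for any open simply connected $U \subset \R^2$. I fix a base ball $B \cc U_1$. For each $n$ and every $\epsilon$ smaller than $\dist(\overline{U_n}, \partial U_{n+1})$, the mollifications $f_i^\epsilon := f_i * \rho_\epsilon$ are smooth on $U_n$. Since mollification commutes with distributional differentiation, the compatibility condition $\partial_1 f_2 = \partial_2 f_1$ passes to the classical identity $\partial_1 f_2^\epsilon = \partial_2 f_1^\epsilon$ on $U_n$. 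The classical Poincaré lemma on the simply connected $U_n$ then yields a smooth primitive $\theta_n^\epsilon$ with $\grad \theta_n^\epsilon = (f_1^\epsilon, f_2^\epsilon)$, which I normalize by requiring $\int_B \theta_n^\epsilon = 0$.

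Next I would pass to the limit $\epsilon \to 0$ on each fixed $U_n$. Standard properties of mollifiers give $f_i^\epsilon \to f_i$ in $L^p(U_n)$, so $\grad \theta_n^\epsilon$ is Cauchy in $L^p(U_n, \R^2)$. Combined with the zero-mean normalization on $B$, the Poincaré–Wirtinger inequality on the bounded connected Lipschitz domain $U_n$ (subtract the mean on $B$ and absorb it using the normalization) upgrades this to a Cauchy sequence in $W^{1,p}(U_n)$. Let $\theta_n \in W^{1,p}(U_n)$ be the limit; it satisfies $\partial_i \theta_n = f_i$ in $L^p(U_n)$.

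The final step is to glue the $\theta_n$ into a single $\theta \in W^{1,p}_\loc(U)$ and establish uniqueness. For this I need the following rigidity: if $u \in W^{1,p}_\loc(V)$ on a connected open set $V$ has $\grad u = 0$ in $\D'$, then $u$ is constant. This follows by mollifying $u$ inside an exhaustion of $V$ (the mollified functions are smooth with gradients converging to $0$ in $L^p_\loc$, hence are constants, and these constants must agree on overlaps and pass to the limit). Applying this to $\theta_{n+1} - \theta_n$ on $U_n$, and using that both have mean zero on $B$, gives $\theta_{n+1} = \theta_n$ on $U_n$, so the $\theta_n$ extend consistently to the required $\theta$. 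The same rigidity gives uniqueness of $\theta$ up to an additive constant.

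The main obstacle is the last paragraph rather than the local existence: one has to be careful that ``vanishing weak gradient'' truly forces constancy for $W^{1,p}_\loc$ functions (including the delicate cases $p=1$ where no Sobolev embedding helps), which is precisely where the connectedness of $U$ is used via the mollification-and-patching argument sketched above.
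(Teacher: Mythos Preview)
The paper does not prove this lemma; it is merely recalled from Mardare's 2007 paper (Theorem~6.5 there) and used as a black box in the proof of \cref{thm:FTST}. Hence there is no ``paper's own proof'' to compare against.

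That said, your argument is essentially correct and follows a standard route: mollify, apply the classical Poincar\'e lemma on a simply connected exhaustion, pass to the limit via a Poincar\'e--Wirtinger inequality with the mean taken on a fixed sub-ball, and glue. Two minor remarks. First, the existence of an exhaustion of a simply connected planar domain by relatively compact, simply connected, smooth subdomains is true but not entirely trivial; you can justify it via the Riemann mapping theorem (preimages of concentric disks) rather than leave it as an unsupported assertion. Second, in your rigidity step the mollified gradients are not merely ``converging to $0$'' but are \emph{identically} zero, since $\nabla(u\ast\rho_\epsilon)=(\nabla u)\ast\rho_\epsilon=0$; this makes the argument cleaner and shows at once that each $u\ast\rho_\epsilon$ is locally constant, hence constant by connectedness, and the constants agree and pass to the limit. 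With these cosmetic adjustments your proof stands on its own for all $p\ge 1$.
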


Finally, we remark that the Pfaffian system in \cref{eq:PfaffSystem} studied in this paper can be understood in the following way: We interpret $\Omega \in \so(m) \otimes \wedge^1 \R^2$ as a tensor $\Omega^i_{j \ell}$ that is antisymmetric in $i$ and $j$. \Cref{eq:PfaffSystem} then reads, for $\ell=1,2$,
\begin{equation*}
  \partial_\ell P = P \Omega_\ell,
\end{equation*}
that is, assuming the summation convention,
\begin{equation*}
  \partial_\ell P^i_j = P^i_k \Omega^k_{j \ell}.
\end{equation*}

\section{Pfaffian Systems with Coefficients in $L^2$}
\label{sec:Pfaffian systems}

This section is devoted to the proof of \cref{thm:PfaffGlobal}. Building upon \cref{lem:Riviere}, we first show the following
\begin{proposition}
  \label{prop:Pfaff}
  Let $U$ and $\Omega$ be as in \cref{lem:Riviere} and let $\Omega$ satisfy the compatibility equation
  \begin{equation}\label{eq:Pfaff_Compatibility}
    \partial_i \Omega_j - \partial_j \Omega_i = \Omega_j \Omega_i - \Omega_i \Omega_j.
  \end{equation}
  Then there exists $P \in W^{1,2}(U, \SO(m))$ such that
  \begin{equation}\label{eq:Pfaff_eq}
    \grad P + \Omega P = 0.
  \end{equation}
  Moreover, if $P_0$ and $P_1$ are two such solutions then there exists a constant $C \in \SO(m)$ such that
  \begin{equation}
    P_0 = P_1 C.
  \end{equation}
\end{proposition}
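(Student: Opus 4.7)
The plan is to use \cref{lem:Riviere} to produce a preliminary pair $(P, \xi)$ satisfying $P^{-1} \grad P + P^{-1} \Omega P = \grad^\perp \xi$, and then to exploit the compatibility condition to show that $\xi$ satisfies a Wente-type PDE which, on sufficiently small domains, forces $\xi \equiv 0$ and hence $\grad P + \Omega P = 0$. The general case will be handled by a covering-and-patching argument that uses the simple connectedness of $U$ in an essential way, and uniqueness will follow from a direct computation.

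The key computation is the following. Writing $\gamma := P^{-1} \grad P + P^{-1} \Omega P$ viewed as a one-form, one recognizes $\gamma$ as the gauge transform of the connection $\Omega$ by $P$, and the standard curvature transformation law gives
\begin{equation*}
  d\gamma + \gamma \wedge \gamma = P^{-1}(d\Omega + \Omega \wedge \Omega) P.
\end{equation*}
The compatibility equation \cref{eq:Pfaff_Compatibility} asserts precisely that $d\Omega + \Omega \wedge \Omega = 0$; all products above are well-defined in $L^1$ because $P \in L^\infty$, while $\grad P, \Omega \in L^2$. Substituting $\gamma = \grad^\perp \xi$ into $d\gamma + \gamma \wedge \gamma = 0$ and matching $dx^1 \wedge dx^2$ components then yields the distributional identity
\begin{equation*}
  -\Delta \xi = [\partial_1 \xi, \partial_2 \xi].
\end{equation*}

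Each scalar component of the right-hand side is a finite sum of Jacobians of components of $\xi$, and $\xi \in W^{1,2}_0(U, \so(m))$, so Wente's inequality applied componentwise will give
\begin{equation*}
  \norm{\grad \xi}_{L^2(U)} \leq C(m) \norm{\grad \xi}_{L^2(U)}^2.
\end{equation*}
Combined with the Rivière estimate $\norm{\grad \xi}_{L^2(U)}^2 \leq C(m) \norm{\Omega}_{L^2(U)}^2$, this forces $\grad \xi \equiv 0$, hence $\xi \equiv 0$ by the zero boundary condition, as soon as $\norm{\Omega}_{L^2(U)}$ lies below a universal threshold $\varepsilon_0 = \varepsilon_0(m)$. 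In that regime the $P$ produced by \cref{lem:Riviere} already solves $\grad P + \Omega P = 0$ on $U$.

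For arbitrary $\Omega$ I will cover $U$ by finitely many balls $B_j \cc U$ on each of which $\norm{\Omega}_{L^2(B_j)} < \varepsilon_0(m)$, and apply the previous step on each $B_j$ to obtain local solutions $P_j \in W^{1,2}(B_j, \SO(m))$. A short calculation shows that if $P_0$, $P_1$ both satisfy $\grad P + \Omega P = 0$ on a connected overlap, then $\partial_i(P_1^{-1} P_0) = 0$, so two solutions differ by a constant in $\SO(m)$; this also immediately settles the uniqueness statement of the proposition. The simple connectedness of $U$ then lets one rotate the $P_j$ consistently and glue them into a single $P \in W^{1,2}(U, \SO(m))$. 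The main obstacle will be precisely this reduction from arbitrary to small data: the Wente estimate only delivers the vanishing of $\xi$ under a smallness condition on $\norm{\Omega}_{L^2}$, so the covering-and-patching step is unavoidable, and this is where simple connectedness of $U$ enters decisively.
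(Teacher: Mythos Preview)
Your approach is correct in spirit and genuinely different from the paper's, though a couple of details need tightening.

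\textbf{Comparison with the paper.} The paper also derives the equation $\Delta \xi = -[\partial_1 \xi,\partial_2\xi]$, but then disposes of $\xi$ by a Hopf-differential argument in the style of Wente's 1975 paper rather than by Wente's $L^2$ inequality: after invoking boundary regularity (M\"uller--Schikorra) and reflecting $\xi$ across $\partial B_1$ by inversion, the paper shows that $\Phi=\langle \xi_z,\xi_z\rangle$ is holomorphic (this uses the trace identity $\tr(AB)=\tr(BA)$ and the specific commutator structure of the right-hand side), hence $\Phi\equiv 0$ since $\Phi\in L^1(\C)$; then the Hartman--Wintner lemma and the vanishing of $\xi$ on $S^1$ force $\xi\equiv 0$. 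This works directly on the given $U$ with no smallness hypothesis and no patching, so the Proposition is obtained in one stroke; the local-to-global covering argument is deferred to the separate proof of \cref{thm:PfaffGlobal}. Your route via Wente's inequality is more elementary, but the price is that you must rerun the monodromy/patching argument already inside the Proposition, essentially merging the Proposition with \cref{thm:PfaffGlobal}.

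\textbf{Two points to fix.} First, an open bounded $U$ cannot be covered by \emph{finitely} many balls $B_j\cc U$; you will need a (locally finite, countable) cover and the corresponding gluing argument for distributions, exactly as in the paper's proof of \cref{thm:PfaffGlobal}. Second, patching a priori only yields $P\in W^{1,2}_\loc(U,\SO(m))$, whereas the Proposition asserts $P\in W^{1,2}(U,\SO(m))$; you should note explicitly that $\abs{\grad P}=\abs{\Omega P}\leq C(m)\abs{\Omega}$ pointwise and $P\in L^\infty$, so $\grad P\in L^2(U)$ follows from $\Omega\in L^2(U)$.
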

\begin{proof}
  By \cref{lem:Riviere}, there exist $\xi \in W_0^{1,2}(U, \so(m))$ and $P \in W^{1,2}(U, \SO(m))$ such that
  \begin{equation}
    P^{-1} \grad P + P^{-1} \Omega P = \grad^\perp \xi.
  \end{equation}
  We obtain, using the compatibility equation \labelcref{eq:Pfaff_Compatibility},
  \begin{align*}
    \nabla^\perp (P \nabla^\perp \xi) &= \nabla^\perp \nabla P + \nabla^\perp (\Omega P) \\
    \nonumber &= \nabla^\perp \Omega P + \Omega \nabla^\perp P \\
    \nonumber &= -\Omega_1 P \partial_1 \xi - \Omega_2 P \partial_2 \xi,
  \end{align*}
  whence
  \begin{equation*}
    \nabla^\perp P \cdot \nabla^\perp \xi + P \Delta \xi = -\Omega P \nabla \xi,
  \end{equation*}
  and thus
  \begin{align*}
    P \Delta \xi &= - \nabla^\perp P \cdot \nabla^\perp \xi - \Omega P \nabla \xi \\
    \nonumber &= -P \nabla^\perp \xi \cdot \nabla \xi,
  \end{align*}
  or equivalently
  \begin{equation}\label{eq:PoissonXi}
    \Delta \xi = (\partial_2 \xi) (\partial_1 \xi) - (\partial_1 \xi) (\partial_2 \xi).
  \end{equation}

  While the right hand side of this equation is not necessarily equal to zero, we claim that \cref{eq:PoissonXi} does imply that $\xi \equiv 0$, using that $\xi|_{\partial U} = 0$.
  Indeed, this follows directly from a theorem of Wente \cite{wente1975differential}, thanks to the fact that, by a result of Müller and Schikorra \cite{muller2009boundary}, $\xi \in W_0^{1,2}(U, \so(m))$ is continuous in $\bar{U}$. This yields \cref{eq:Pfaff_eq}.

  Now suppose that $P_0, P_1 \in W^{1,2}(U, \SO(m))$ solve
  \begin{align*}
    \nabla P_0 + \Omega P_0 &= 0, \\
    \nabla P_1 + \Omega P_1 &= 0
  \end{align*}
  in $U$, respectively. Since $\nabla(P_1^{-1}) = - P_1^{-1} (\nabla P_1) P_1^{-1}$, we have
  \begin{align*}
    \nabla(P_1^{-1} P_0) &= \nabla(P_1^{1}) P_0 + P_1^{-1} \nabla P_0 \\
    \nonumber &= - P_1^{-1} (\nabla P_1) P_1^{-1} P_0 + P_1^{-1} \nabla P_0 \\
    \nonumber &= P_1^{-1} \Omega P_1 P_1^{-1} P_0 - P_1^{-1} \Omega P_0 \\
    \nonumber &= 0.
  \end{align*}
  Thus $P_1^{-1} P_0 = C$, that is, $P_0 = P_1 C$, a constant invertible matrix. We also have $C^T C = P_0^{-1} P_1 P_1^{-1} P_0 = I$ and $\det C = (\det P_1)^{-1} \det P_0 = 1$, whereby $C \in \SO(m)$.
\end{proof}

In order to prove \cref{thm:PfaffGlobal}, it remains to extend the statement of \cref{prop:Pfaff} to any connected and simply connected open set $U$.
In fact, this amounts to the construction of a global solution by gluing together local solutions as in, e.\,g., the proof of the Poincaré lemma with little regularity given by Mardare \cite{mardare2008poincare}, the details of which we thus omit. It should be noted that this construction is not limited to the two-dimensional case. Alternatively, one may use the fact that, as mentioned above, simply connected domains in the plane are contractible. As a result, \cref{thm:PfaffGlobal} follows readily after transposition and using that $\Omega_i \in \so(m)$.

\section{Application to the Fundamental Theorem of Surface Theory}
\label{sec:Application to FTST}

In this section, we shall apply \cref{thm:PfaffGlobal} in order to prove the existence of a $W^{2,2}_\loc$-immersion of a surface with prescribed first and second fundamental forms in the classes $W^{1,2}_\loc$ and $L^2_\loc$, respectively. First, we motivate the definition of appropriate antisymmetric matrix fields $\Omega_i$ that serve as the coefficients of a Pfaffian system. After that, we show that the quantities derived from the given matrix fields that are to be realized as fundamental forms of a surface possess the required regularity. We then prove \cref{thm:FTST}. Lastly, we demonstrate that the compatibility equation satisfied by the matrix fields $\Omega_i$ is equivalent to the Gauss--Codazzi--Mainardi equations in the present setting.

\subsection{Derivation of antisymmetric coefficients}

Following the exposition in Clelland \cite{clelland2017frenet}, we derive the antisymmetric quantities that have previously been introduced by Ciarlet, Gratie, and C. Mardare \cite{ciarlet2008new}, but this time from the viewpoint of Cartan geometry.

Let $U \subset \R^2$ be open, connected, and simply connected and let $\theta: U \to (\R^3, \langle \cdot\,, \cdot \rangle)$ be a smooth immersion whose image $\Sigma = \theta(U)$ is a regular surface.
Furthermore, let $\tilde\theta: U \to E(3)$, $\tilde\theta(x) = (\theta(x); e_1(x), e_2(x), e_3(x))$, where $E(3)$ is the Euclidean group, be an adapted frame field. This means that for each $x \in U$, $(e_1(x), e_2(x), e_3(x))$ is an oriented orthonormal basis of $T_{\theta(x)}\R^3$ and $e_3(x)$ is orthogonal to $T_{\theta(x)}\Sigma$.

We define scalar-valued one-forms $(\omega^i, \omega^i_j)$ on $E(3)$ by
\begin{align}
  \dx{\theta} &= e_i \omega^i, \\
  \dx{e_i} &= e_j \omega^j_i.
\end{align}
They have the properties
\begin{align*}
  \omega^i(e_j) &= \delta^i_j, \\
  \omega^j_i &= - \omega^i_j,
\end{align*}
and they satisfy the Cartan structure equations
\begin{align}
  \dx{\omega^i} &= - \omega^i_j \wedge \omega^j, \\
  \dx{\omega^i_j} &= - \omega^i_k \wedge \omega^k_j.
\end{align}

The Maurer--Cartan form on $E(3)$ is given by
\begin{equation*}
  \Omega = \begin{pmatrix}
    0 & 0 & 0 & 0 \\
    \omega^1 & \omega^1_1 & \omega^1_2 & \omega^1_3 \\
    \omega^2 & \omega^2_1 & \omega^2_2 & \omega^2_3 \\
    \omega^3 & \omega^3_1 & \omega^3_2 & \omega^3_3
  \end{pmatrix}
\end{equation*}
and the Cartan structure equations are equivalent to the Maurer--Cartan equation
\begin{equation}
  \label{eqn:MaurerCartan}
  \dx{\Omega} = - \Omega \wedge \Omega.
\end{equation}
The one-forms $(\omega^i, \omega^i_j)$ are collectively referred to as Maurer--Cartan forms as well.

Let $(\bar\omega^i, \bar\omega^i_j) = (\tilde \theta^* \omega^i, \tilde \theta^* \omega^i_j)$ be the pullbacks on $U$. Then $\bar\omega^3 = 0$ and $\bar \omega^1$, $\bar \omega^2$ form a basis for the one-forms on $U$.
The first fundamental form on $T U$ is given by
\begin{equation}
  \mathrm{I} = (\bar \omega^1)^2 + (\bar \omega^2)^2,
\end{equation}
and the second fundamental form by
\begin{equation}
  \mathrm{II} = \bar \omega^3_1 \bar \omega^1 + \bar \omega^3_2 \bar \omega^2 = h_{11} (\bar \omega^1)^2 + 2 h_{12} \bar \omega^1 \bar \omega^2 + h_{22} (\bar \omega^2)^2,
\end{equation}
where $h_{11}, h_{12}, h_{22}$ are such that
\begin{equation*}
  \begin{pmatrix}
    \bar \omega^3_1 \\ \bar \omega^3_2
  \end{pmatrix} = \begin{pmatrix}
    h_{11} & h_{12} \\ h_{12} & h_{22}
  \end{pmatrix}
  \begin{pmatrix}
    \bar \omega^1 \\ \bar \omega^2
  \end{pmatrix}.
\end{equation*}

Hence\begin{alignat*}{2}
  \bar \omega^3_k &= h_{kj} \bar \omega^j, &\qquad 1 &\leq j,k \leq 2, \\
  \bar \omega^j_\ell &= \Gamma_{k\ell}^j \bar \omega^k, &\qquad 1 &\leq j,k,\ell \leq 2, \\
  \bar \omega^i_i &= - \bar \omega^i_i = 0, &\qquad i &= 1,2,3.
\end{alignat*}
Moreover, the Maurer--Cartan equation \labelcref{eqn:MaurerCartan} is equivalent to the Gauss--Codazzi--Mainardi equations, which read in this notation as follows:
\begin{align}
  \dx{\bar \omega^1_2} &= \bar \omega^3_1 \wedge \bar \omega^3_2, \\
  \dx{\bar \omega^3_1} &= \bar \omega^3_2 \wedge \bar \omega^1_2, \\
  \dx{\bar \omega^3_2} &= - \bar \omega^3_1 \wedge \bar \omega^1_2.
\end{align}

If we write
\begin{equation*}
  \underline{\omega} = \begin{pmatrix}
  0 & \bar\omega^1_2 & \bar\omega^1_3 \\
  \bar\omega^2_1 & 0 & \bar\omega^2_3 \\
  \bar\omega^3_1 & \bar\omega^3_2 & 0
  \end{pmatrix},
\end{equation*}
then we may define
\begin{equation*}
  \Gamma_i := \underline{\omega}(e_i) = \begin{pmatrix}
    0 & \Gamma_{i2}^1 & -h_{1i} \\
    \Gamma_{i1}^2 & 0 & -h_{2i} \\
    h_{1i} & h_{2i} & 0
  \end{pmatrix}.
\end{equation*}

Now, given a metric $\bar g$ on $\Sigma$ and an orthonormal frame $e = (e_1, e_2, e_3)$, we set
\begin{equation*}
  g = \begin{pmatrix}
    \bar g_{11} & \bar g_{12} & 0 \\
    \bar g_{21} & \bar g_{22} & 0 \\
    0 & 0 & 1
  \end{pmatrix}^{\frac{1}{2}}.
\end{equation*}
Defining the frame $e' = eg^{-1}$, which is orthonormal with respect to $g^2$, the Maurer--Cartan form in this frame is given by means of the gauge transformation
\begin{equation}
    \underline{\omega}' = (g \underline{\omega} + \dx{g}) g^{-1},
\end{equation}
which implies in components that
\begin{equation}
  \Gamma'_i = (g \Gamma_i - \partial_i g) g^{-1}.
\end{equation}
Differentiating the orthonormality condition for the frame $e'$ with respect to the metric $g^2$, we see that the connection coefficients $\Gamma'_i$ must be antisymmetric.

\subsection{Regularity of coefficients}
\label{sub:RegularityCoefficients}
Let $(a_{i j}) \in W ^{1,2}_\loc (U, \Symp(2)) \cap L^\infty_\loc (U, \Symp(2))$ and $(b_{i j}) \in L^2_\loc (U, \Sym(2))$ and define
\begingroup
\allowdisplaybreaks
  \begin{align}
    (a^{i j}) &= \frac{1}{a_{11}a_{22} - a_{12}a_{21}} \begin{pmatrix}
      a_{22} & -a_{12} \\ -a_{21} & a_{11}
    \end{pmatrix}, \\
    b_ i^j &= a^{j k} b_{i k}, \\
    \Gamma^k_{i j} &= \frac{1}{2} a^{k \ell} (\partial_j a_{i \ell} + \partial_ i a_{j \ell} - \partial_ \ell a_{i j}), \\
    G &= \begin{pmatrix}
      a_{11} & a_{12} & 0 \\ a_{21} & a_{22} & 0 \\ 0 & 0 & 1
    \end{pmatrix}^\frac{1}{2}, \\
    \Gamma_ i &= \begin{pmatrix}
      \Gamma_{i 1}^1 & \Gamma_{i 2}^1 & -b_ i^1 \\
      \Gamma_{i 1}^2 & \Gamma_{i 2}^2 & -b_ i^2 \\
      b_{i 1} & b_{i 2} & 0
    \end{pmatrix}, \\
    \Omega_ i &= (G \Gamma_ i - \partial_ i G) G^{-1}.
  \end{align}
\endgroup

Since $W^{1,2}_\loc \cap L^\infty_\loc$ is an algebra, we see that
\begin{equation*}
  \det (a_{i j}) = a_{11}a_{22} - a_{12}a_{21} \in W^{1,2}_\loc \cap L^\infty_\loc.
\end{equation*}
Now assume in addition that the (positive) eigenvalues of $(a_{i j})$ are locally uniformly bounded away from zero, i.\,e., there exists $C>0$ such that $0<C<\lambda_1<\lambda_2$ almost everywhere in $K \cc U$. Then $\det(a_{i j})^{-1} \in L^\infty_\loc$.
Therefore, we have that $(a^{i j}) \in L^\infty_\loc$. Moreover, the fact that
\begin{equation*}
  D(\det(a_{i j})^{-1}) = - \frac{D(\det(a_{i j}))}{\det(a_{i j})^2}
\end{equation*}
implies that
\begin{equation*}
  \det(a_{i j})^{-1} \in W^{1,2}_\loc \cap L^\infty_\loc.
\end{equation*}
Hence
\begin{equation}
  (a^{i j}) \in W ^{1,2}_\loc (U, \Symp(2)) \cap L^\infty_\loc( U, \Symp(2)).
\end{equation}

Furthermore, by the boundedness of $(a^{i j})$ and as $(a_{i j}) \in W ^{1,2}_\loc$, we obtain that
\begin{equation}
  \Gamma_{i j}^k \in L^2_\loc(U).
\end{equation}
From the formula
\begin{equation*}
  A^{\frac{1}{2}} = \frac{1}{\sqrt{\tr A + 2 \sqrt{\det A}}}(A + \sqrt{\det A} I),
\end{equation*}
valid for any $A \in \Symp(2)$, we infer, using again $(a_{i j}) \in W ^{1,2}_\loc \cap L^\infty_\loc$ and the boundedness of the eigenvalues away from zero, that
\begin{equation}
  (a_{i j})^\frac{1}{2}, (a_{i j})^{-\frac{1}{2}} \in W ^{1,2}_\loc \cap L^\infty_\loc.
\end{equation}
Finally, as $\Gamma_ i \in L^2_\loc$ we conclude that
\begin{equation}
  \Omega_ i \in L^2_\loc(U, \gl(3)).
\end{equation}

It remains to show that each matrix $\Omega_ i$ is antisymmetric. (The following argument is taken from the proof of Theorem 7 in Ciarlet, Gratie, and C. Mardare \cite{ciarlet2008new}.) Equivalently, we may show that
\begin{equation}
  G \Omega_ i G = G^2 \Gamma_ i - G \partial_ i G
\end{equation}
is antisymmetric. By a direct computation,  using the symmetry of $(a_{i j})$, we find that
\begin{equation*}
  G^2 \Gamma_ i + \Gamma_ i^T G^2 = \begin{pmatrix}
    2 \Gamma_{i 11} & \Gamma_{i 12}+\Gamma_{i 21} & 0 \\
    \Gamma_{i 21}+\Gamma_{i 12} & 2 \Gamma_{i 22} & 0 \\
    0 & 0 & 0
  \end{pmatrix} = \partial_ i G^2.
\end{equation*}
Here, as usual, $\Gamma_{i j k} = a_{k \ell} \Gamma_{i j}^\ell$. We thus compute
\begin{align*}
  G \Omega_ i G &= G^2 \Gamma_ i - G \partial_ i G \\
  \nonumber &= \frac{1}{2} G^2 \Gamma_ i + \frac{1}{2} (\partial_ i G^2 - \Gamma_ i^T G^2) - G \partial_ i G \\
  \nonumber &= \frac{1}{2} (G^2 \Gamma_ i - \Gamma_ i^T G^2) + \frac{1}{2} \big((\partial_ i G)G + G \partial_ i G\big) - G \partial_ i G \\
  \nonumber &= \frac{1}{2} (G^2 \Gamma_ i - \Gamma_ i^T G^2) + \frac{1}{2} \big((\partial_ i G) G - G \partial_ i G\big),
\end{align*}
whereby, indeed, $\Omega_i \in \so(3)$.

Therefore, we have shown that if $(a_{ij}) \in W^{1,2}_\loc(U, \Symp(2)) \cap L^\infty_\loc(U, \Symp(2))$ and $(b_{ij}) \in L^2_\loc(U, \Sym(2))$ are given and the eigenvalues of $(a_{ij})$ are locally uniformly bounded from below then $\Omega \in L^2_\loc(U, \so(3) \otimes \wedge^1 \R^2)$.

\subsection{Optimal regularity theorem}
\label{sub:FTST}

We are now in a position to prove the optimal regularity case of the fundamental theorem of surface theory. By and large, we follow the proof of the corresponding Theorem 7 in Ciarlet, Gratie, and C. Mardare \cite{ciarlet2008new}.

\begin{proof}[Proof of \cref{thm:FTST}]
  We have shown in the previous section that $\Omega \in L^2(U, \so(3) \otimes \wedge^1 \R^2)$ and by assumption the compatibility equation is satisfied. Therefore, by \cref{thm:PfaffGlobal}, there exists $P \in W^{1,2}_\loc(U, \SO(3))$ such that
  \begin{equation}
    \partial_ i P = P \Omega_ i.
  \end{equation}

  Let $G_{(i)} = g_ i$ denote the $i$-th column of $G$. We know that $P \in W^{1,2}_\loc$ and $G \in W^{1,2}_\loc \cap L^\infty_\loc$. Furthermore, as $P \in \SO(3)$, $P$ is essentially bounded. Thus we conclude that $P g_ i \in W^{1,2}_\loc \cap L^\infty_\loc$.

  In order to apply \cref{lem:Poincare}, we require that
  \begin{equation*}
    \partial_ j (P g_ i) = \partial_ i (P g_ j).
  \end{equation*}
  As $\partial_ i P = P \Omega_ i$ and $P \in \SO(3)$, we obtain
  \begin{align*}
    \partial_ j (P g_ i) - \partial_ i (P g_ j) &= (\partial_ j P) g_ i + P \partial_ j g_ i - (\partial_ i P) g_ j - P \partial_ i g_ j \\
    \nonumber &= P \Omega_ j g_ i + P \partial_ j g_ i - P \Omega_ i g_ j - P \partial_ i g_ j,
  \end{align*}
  which is equal to zero if and only if
  \begin{align*}
    0 &= \Omega_ j g_ i + \partial_ j g_ i - \Omega_ i g_ j - \partial_ i g_ j \\
    \nonumber &= (G \Gamma_ j - \partial_ j G) G^{-1} g_ i + \partial_ j g_ i - (G \Gamma_ i - \partial_ i G) G^{-1} g_ j + \partial_ i g_ j \\
    \nonumber &= (G \Gamma_ j - \partial_ j G) e_ i + \partial_ j g_ i - (G \Gamma_ i - \partial_ i G) e_ j + \partial_ i g_ j \\
    \nonumber &= (G \Gamma_ j)_{(i)} - (G \Gamma_ i)_{(j)} \\
    \nonumber &= G \begin{pmatrix}
      \Gamma^1_{j i} \\ \Gamma^2_{j i} \\ b_{j i}
    \end{pmatrix}
    - G \begin{pmatrix}
      \Gamma^1_{i j} \\ \Gamma^2_{i j} \\ b_{i j}
    \end{pmatrix},
  \end{align*}
  where $e_i$ denotes the $i$-th unit vector in $\R^3$. Since $\Gamma_{i j}^k = \Gamma_{j i}^k$ and $b_{i j} = b_{j i}$, it follows that
  \begin{equation*}
    \partial_ j (P g_ i) - \partial_ i (P g_ j) = 0.
  \end{equation*}

  As a result, by \cref{lem:Poincare}, there exists $\theta \in W^{1,2}_\loc(U, \R^3)$ such that
  \begin{equation}
    \partial_ i \theta = P g_ i
  \end{equation}
  in $L^2_\loc$. Since $P g_ i \in W^{1,2}_\loc$, we conclude that in fact $\theta \in W^{2,2}_\loc(U, \R^3)$. Moreover, as the vectors $P g_ i$ are linearly independent, the map $\theta$ is an immersion.

  Defining $F := PG \in W^{1,2}_\loc \cap L^\infty_\loc$ and $f_i = F_{(i)}$ (here, $i=1,2,3$), we have that
  \begin{align*}
    \partial_ i \theta &= f_ i, \\
    F^T F = G^2 &= \begin{pmatrix}
      a_{11} & a_{12} & 0 \\ a_{21} & a_{22} & 0 \\ 0 & 0 & 1
    \end{pmatrix}.
  \end{align*}
  Thus
  \begin{equation*}
    f_ i^T f_ j = a_{i j},
  \end{equation*}
  whence
  \begin{equation}
    \partial_ i \theta \cdot \partial_ j \theta = a_{i j},
  \end{equation}
  and the matrix field $(a_{i j})$ is indeed the first fundamental form of the surface $\theta(U)$.

  Furthermore, it is clear that $f_i^T f_3 = \delta_{i3}$, $i=1,2,3$. Therefore, taking into account that $F$ is positive definite almost everywhere, it follows that
  \begin{equation*}
    f_3 = \frac{f_1 \times f_2}{\abs{f_1 \times f_2}}.
  \end{equation*}
  Meanwhile, we compute
  \begin{align*}
    \partial_{i j} \theta &= \partial_ j (P g_ i) \\
    \nonumber &= (\partial_ j P) g_ i + P \partial_ j g_ i \\
    \nonumber &= P (\Omega_ j g_ i + \partial_ j g_ i) \\
    \nonumber &= P (\Omega_ j G + \partial_ j G)_{(i)} \\
    \nonumber &= P (G \Gamma_ j)_{(i)} \\
    \nonumber &= F (\Gamma_ j)_{(i)}.
  \end{align*}
  As a result, we obtain that
  \begin{align}
    \partial_{i j} \theta \cdot f_3 &= (\partial_{i j} \theta)^T f_3 \\
    \nonumber &= \big((\Gamma_ j)_{(i)}\big)^T F^T f_3 \\
    \nonumber &= \begin{pmatrix}
      \Gamma^1_{j i} & \Gamma^2_{j i} & b_{j i}
    \end{pmatrix} \cdot e_3 \\
    \nonumber &= b_{j i},
  \end{align}
  whereby the matrix field $(b_{i j})$ is the second fundamental form of $\theta(U)$.

  Regarding the question of uniqueness of the immersion thus obtained, we note that by \cref{thm:PfaffGlobal}, the matrix field $P$ is unique up to a multiplicative constant $C \in \SO(3)$, while the function $\theta$ that results from the application of \cref{lem:Poincare} is unique up to an additive constant $b \in \R^3$. Therefore, any two immersions $\theta$, $\tilde{\theta}$ constructed by means of the above procedure are related by
  \begin{equation*}
    \theta = C \tilde{\theta} + b,
  \end{equation*}
  and the proof is complete.
\end{proof}

\subsection{Equivalence of compatibility conditions}
\label{sub:Equivalence of compatibility condition}

By means of a direct computation, we argue that \cref{eq:Compatibility} is equivalent to the Gauss--Codazzi--Mainardi equations.

\begin{proposition}\label{prop:Equivalence}
  In the $W^{2,2}_\loc$-setting of \cref{thm:FTST}, the compatibility condition in \cref{eq:Compatibility} is necessary and sufficient for the Gauss--Codazzi--Mainardi equations to hold.
\end{proposition}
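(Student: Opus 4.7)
The plan is to establish the equivalence in two stages: first, show that the compatibility condition for $\Omega$ is the gauge transform (by $G$) of an analogous matrix condition for the connection coefficients $\Gamma_i$; second, expand the latter entry by entry and identify it with the classical Gauss and Codazzi-Mainardi equations.

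For the first stage, starting from $\Omega_i = G \Gamma_i G^{-1} - (\partial_i G) G^{-1}$, I would compute directly that
\[
\partial_1 \Omega_2 - \partial_2 \Omega_1 - (\Omega_2 \Omega_1 - \Omega_1 \Omega_2) = G\bigl(\partial_1 \Gamma_2 - \partial_2 \Gamma_1 - (\Gamma_2 \Gamma_1 - \Gamma_1 \Gamma_2)\bigr) G^{-1}.
\]
This is a Leibniz-rule calculation in which the cross terms pair off and cancel, using $\partial_i(G^{-1}) = -G^{-1}(\partial_i G) G^{-1}$ and the equality of mixed distributional partials $\partial_{12} G = \partial_{21} G$. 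Thanks to \cref{sub:RegularityCoefficients}, we have $G, G^{-1} \in W^{1,2}_\loc \cap L^\infty_\loc$ and $\Gamma_i \in L^2_\loc$, so every product on either side is well-defined in $L^2_\loc$ or as a product of an $L^\infty_\loc$-function with a distribution, and the identity holds in $\D'(U)$. Since $G$ is pointwise invertible almost everywhere, the compatibility equation for $\Omega$ is thus equivalent to
\[
\partial_1 \Gamma_2 - \partial_2 \Gamma_1 = \Gamma_2 \Gamma_1 - \Gamma_1 \Gamma_2 \qquad \text{in } \D'(U).
\]

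In the second stage, I would read off this matrix identity entry by entry, using the explicit block structure of $\Gamma_i$. For $j, k \in \{1,2\}$ the $(j,k)$-entry yields
\[
\partial_1 \Gamma_{2k}^j - \partial_2 \Gamma_{1k}^j + \sum_{\ell=1}^{2} \bigl(\Gamma_{1\ell}^j \Gamma_{2k}^\ell - \Gamma_{2\ell}^j \Gamma_{1k}^\ell\bigr) = b_1^j b_{2k} - b_2^j b_{1k},
\]
which is the Gauss equation relating the Riemann tensor of $(a_{ij})$ to the second fundamental form. The entries with exactly one index equal to $3$ give the Codazzi-Mainardi equations in two equivalent guises: from the row $j=3$ one obtains $\partial_1 b_{2k} - \partial_2 b_{1k} = \sum_{\ell}\bigl(b_{2\ell}\Gamma_{1k}^\ell - b_{1\ell}\Gamma_{2k}^\ell\bigr)$ for $k=1,2$, while the column $k=3$ gives the version with the mixed components $b_i^j$, the two being related by the raising $b_i^j = a^{jk} b_{ik}$. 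Finally, the $(3,3)$-entry reduces to $\sum_\ell(b_{1\ell} b_2^\ell - b_{2\ell} b_1^\ell) = 0$, which is automatic by the symmetry of $(a^{ij})$. Both implications of the proposition follow from these componentwise identifications.

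I expect the first stage to be the main technical point: since $\partial_i \Omega_j$ lies only in $W^{-1,2}_\loc$, one must verify that the Leibniz-rule expansion and cancellations make sense distributionally once $G$, $G^{-1}$, and the $L^2_\loc$-derivatives $\partial_k G$ are inserted. This is justified by the algebra property of $W^{1,2}_\loc \cap L^\infty_\loc$ recalled in \cref{sec:Notation} and by the standard commutation of mixed distributional partials. The second stage is algebraically explicit but requires careful index bookkeeping, particularly to see that the four Gauss-type equations with $(j,k) \in \{1,2\}^2$ carry the expected symmetries of a $2$-dimensional Riemann tensor, and that the two forms of the Codazzi-Mainardi equation arising from the $j=3$ and $k=3$ slots are mutually equivalent via the metric.
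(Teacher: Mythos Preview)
Your proposal is correct and follows essentially the same route as the paper: reduce the $\Omega$-compatibility to the $\Gamma$-compatibility via the gauge identity (the paper writes this as $0 = G(\partial_1\Gamma_2 - \partial_2\Gamma_1 - \Gamma_2\Gamma_1 + \Gamma_1\Gamma_2)$, which is your conjugation formula multiplied on the right by $G$), then identify the latter with the Gauss--Codazzi--Mainardi equations. The only differences are that the paper cites Mardare \cite{mardare2005pfaff} for the second step rather than carrying out the entry-by-entry identification you sketch, and it additionally---somewhat redundantly---argues the forward implication via the existence of the immersion from \cref{thm:FTST}.
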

\begin{proof}
  Assuming the compatibility condition, we have shown the existence of a $W^{2,2}_\loc$-immersion with associated first and second fundamental forms $(a_{ij})$, $(b_{ij})$ which necessarily satisfy the Gauss--Codazzi--Mainardi equations in the distributional sense.

  Moreover, we have
  \begin{align}
    0 =&~ \partial_1 \Omega_2 - \partial_2 \Omega_1 - \Omega_2 \Omega_1 + \Omega_1 \Omega_2 \\
    \nonumber =&~ \partial_1 \big((G \Gamma_2 - \partial_2 G) G^{-1}\big) \\
    \nonumber &- \partial_2 \big((G \Gamma_1 - \partial_1 G) G^{-1}\big) \\
    \nonumber &- (G \Gamma_2 - \partial_2 G) G^{-1} (G \Gamma_1 - \partial_1 G) G^{-1} \\
    \nonumber &+ (G \Gamma_1 - \partial_1 G) G^{-1} (G \Gamma_2 - \partial_2 G) G^{-1} \\
    \nonumber =&~ \big(\partial_1(G \Gamma_2) - \partial_1 \partial_2 G\big) G^{-1} - (G \Gamma_2 - \partial_2 G) G^{-1} (\partial_1 G) G^{-1} \\
    \nonumber &- \big(\partial_2(G \Gamma_1) - \partial_2 \partial_1 G\big) G^{-1} + (G \Gamma_1 - \partial_1 G) G^{-1} (\partial_2 G) G^{-1} \\
    \nonumber &- (G \Gamma_2 - \partial_2 G) G^{-1} (G \Gamma_1 - \partial_1 G) G^{-1} \\
    \nonumber &+ (G \Gamma_1 - \partial_1 G) G^{-1} (G \Gamma_2 - \partial_2 G) G^{-1}
  \end{align}
  if and only if
  \begin{align}
    0 =&~ \partial_1(G \Gamma_2) - \partial_1 \partial_2 G - (G \Gamma_2 - \partial_2 G) G^{-1} (\partial_1 G) \\
    \nonumber &- \partial_2(G \Gamma_1) + \partial_2 \partial_1 G + (G \Gamma_1 - \partial_1 G) G^{-1} (\partial_2 G) \\
    \nonumber &- (G \Gamma_2 - \partial_2 G) (\Gamma_1 - G^{-1} \partial_1 G) \\
    \nonumber &+ (G \Gamma_1 - \partial_1 G) (\Gamma_2 - G^{-1} \partial_2 G) \\
    \nonumber =&~ (\partial_1 G) \Gamma_2 + G \partial_1 \Gamma_2 - G \Gamma_2 G^{-1} (\partial_1 G) + (\partial_2 G) G^{-1} (\partial_1 G) \\
    \nonumber &- (\partial_2 G) \Gamma_1 - G \partial_2 \Gamma_1 + G \Gamma_1 G^{-1} (\partial_2 G) - (\partial_1 G) G^{-1} (\partial_2 G) \\
    \nonumber &- G \Gamma_2 \Gamma_1 + G \Gamma_2 G^{-1} (\partial_1 G) + (\partial_2 G) \Gamma_1 - (\partial_2 G) G^{-1} (\partial_1 G) \\
    \nonumber &+ G \Gamma_1 \Gamma_2 - G \Gamma_1 G^{-1} (\partial_2 G) - (\partial_1 G) \Gamma_2 + (\partial_1 G) G^{-1} (\partial_2 G) \\
    \nonumber =&~ G (\partial_1 \Gamma_2 - \partial_2 \Gamma_1 - \Gamma_2 \Gamma_1 + \Gamma_1 \Gamma_2).
  \end{align}
  Therefore, the compatibility condition is equivalent to
  \begin{equation}
    \partial_i \Gamma_j + \Gamma_i \Gamma_j = \partial_j \Gamma_i + \Gamma_j \Gamma_i.
  \end{equation}

  On the other hand, in Mardare \cite{mardare2005pfaff}, it has been shown that these equations are indeed equivalent to the Gauss--Codazzi--Mainardi equations, understood in the sense of distributions. We note that their argument readily carries over to the present $p=2$ case. \end{proof}

\section{A Weak Compactness Theorem for Immersions in the Class $W^{2,2}_\mathrm{loc}$}
\label{sec:Compactness theorem}

In order to prove the weak compactness theorem, we first show a corresponding statement for the Pfaffian system $\grad P = P \Omega$.

\begin{lemma}\label{lem:Compactness}
  Let $\{\Omega^k\} \subset L^2(U, \so(3) \otimes \wedge^1 \R^2)$ be a sequence such that $\Omega^k \rightharpoonup \Omega$ in $L^2$ as $k \to \infty$ and suppose that $\Omega^k$ satisfies the compatibility condition for every $k$.
  Then, up to the choice of a subsequence, there exists a sequence $\{P^k\} \subset W^{1,2}_\loc(U, \SO(3))$ of solutions to the equation $\grad P^k = P^k \Omega^k$ such that $P^k \rightharpoonup P$ in $W^{1,2}_\loc$ as $k \to \infty$ and $\grad P = P \Omega$.
\end{lemma}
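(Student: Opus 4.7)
The plan is to apply \cref{thm:PfaffGlobal} for each $k$ to produce a solution $P^k \in W^{1,2}_\loc(U, \SO(3))$ to $\grad P^k = P^k \Omega^k$, and then to extract a weakly convergent subsequence. Since $\Omega^k \rightharpoonup \Omega$ in $L^2(U)$, the sequence $\{\Omega^k\}$ is uniformly bounded in $L^2$; and since $P^k$ takes values in the compact set $\SO(3)$ we have $\abs{P^k} \leq C$ pointwise. The equation itself then furnishes, on every $V \cc U$, the estimate
\begin{equation*}
  \norm{\grad P^k}_{L^2(V)} \leq \norm{P^k}_{L^\infty} \norm{\Omega^k}_{L^2(V)} \leq C \sup_k \norm{\Omega^k}_{L^2(U)},
\end{equation*}
so that $\{P^k\}$ is bounded in $W^{1,2}_\loc(U) \cap L^\infty(U)$ uniformly in $k$.

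Next, I would exhaust $U$ by a nested sequence of relatively compact open sets and apply a standard diagonal argument together with the Rellich--Kondrachov compact embedding $W^{1,2}(V) \hookrightarrow L^p(V)$ for all $p<\infty$. This produces a subsequence (not relabeled) and a limit $P$ such that $P^k \rightharpoonup P$ weakly in $W^{1,2}_\loc$, strongly in $L^p_\loc$ for every finite $p$, and pointwise almost everywhere. Because $\SO(3)$ is closed, the almost-everywhere limit $P$ automatically takes its values in $\SO(3)$.

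It remains to pass to the limit in $\grad P^k = P^k \Omega^k$ in the sense of distributions. The left-hand side converges to $\grad P$ in $\D'(U)$ by weak $L^2_\loc$-convergence of the gradients. For the right-hand side, I would write
\begin{equation*}
  P^k \Omega^k - P \Omega = (P^k - P) \Omega^k + P (\Omega^k - \Omega);
\end{equation*}
testing against $\phi \in \D(U)$, the second term vanishes because $P \phi \in L^2$ and $\Omega^k \rightharpoonup \Omega$ in $L^2$, while the first is controlled by the Cauchy--Schwarz inequality, using the strong $L^2_\loc$-convergence $P^k \to P$ on $\spt \phi$ together with the uniform $L^2$ bound on $\Omega^k$. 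This identifies the distributional limit as $P \Omega$ and completes the argument.

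The principal obstacle is exactly this passage to the limit in the product $P^k \Omega^k$: neither factor converges strongly in $L^2$ on its own, and the argument crucially relies on the compensation afforded by the compactness of $\SO(3)$, which through Rellich--Kondrachov upgrades weak $W^{1,2}_\loc$-convergence of $P^k$ to strong $L^2_\loc$-convergence, paired against the weak $L^2$-convergence of $\Omega^k$. I note in passing that one need not assume compatibility of the limit $\Omega$, since once $\grad P = P \Omega$ is established the identity \cref{eq:Compatibility} for $\Omega$ follows from equality of mixed partials of $P$.
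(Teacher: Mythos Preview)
Your proposal is correct and follows essentially the same approach as the paper: apply \cref{thm:PfaffGlobal} to produce $P^k$, use the pointwise bound from $\SO(3)$ together with the equation to get uniform $W^{1,2}_\loc$ bounds, extract a subsequence via Rellich--Kondrachov, and pass to the limit in the product by pairing the strong $L^2_\loc$ convergence of $P^k$ against the weak $L^2$ convergence of $\Omega^k$. Your exposition is in fact a bit more explicit than the paper's (the diagonal argument, the decomposition $P^k\Omega^k-P\Omega=(P^k-P)\Omega^k+P(\Omega^k-\Omega)$, and the closing remark on compatibility of the limit $\Omega$), but the underlying argument is the same.
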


\begin{proof}
  By \cref{thm:PfaffGlobal}, there exists a sequence $\{P^k\} \subset W^{1,2}_\loc(U, \SO(3))$ such that, for each $k$, $\partial_i P^k = P^k \Omega_i^k$ and $\norm{\grad P^k}_{L^2_\loc} \leq C \norm{\Omega^k}_{L^2_\loc}$.
  Then, as $P^k \in \SO(3)$ and $\{\Omega^k\}$ is uniformly bounded in $L^2_\loc$, so are $\{P^k\}$ and $\{\grad P^k\}$. As a result, there exists a subsequence, still denoted $\{P^k\}$, that converges weakly to some $P$ in $W^{1,2}_\loc$, and strongly in $L^2_\loc$.
  It remains to show that $\grad P = P \Omega$. We know that $\grad P^k \rightharpoonup \grad P$ in $L^2_\loc$. Moreover, since $P^k \to P$ and $\Omega^k \rightharpoonup \Omega$ in $L^2_\loc$ we infer that the product sequence $P^k \Omega^k$ is weakly convergent to some $v$ in $L^1_\loc$. On the other hand, since $P^k \Omega^k = \grad P^k$ for every $k$, we must have for every $\varphi \in L^\infty_\loc \subset L^2_\loc$ that
  \begin{equation*}
    \int P^k \Omega^k \varphi = \int \grad P^k \varphi \to \int \grad P \varphi = \int P \Omega \varphi,
  \end{equation*}
  whereby $v = P \Omega$, by the uniqueness of weak limits, and thus $\grad P = P \Omega$.
\end{proof}

Finally, we can prove \cref{thm:Compactness}.

\begin{proof}[Proof of \cref{thm:Compactness}]
  Let such a sequence $\{\theta^k\}$ of immersions be given. Then we denote the corresponding sequences of first and second fundamental forms by $\{(a_{ij})^k\}$, $\{(b_{ij})^k\}$, respectively. By assumption, we have that $(a_{ij})^k \in W^{1,2}_\loc(U, \Symp(2)) \cap L^\infty_\loc(U, \Symp(2))$ and $(b_{ij})^k \in L^2_\loc(U, \Sym(2))$.
  Moreover, for each $k$, we may define $\Omega_{i}^k \in L^2_\loc(U, \so(3))$ as in \cref{sub:RegularityCoefficients}.

  For each $k$, the $\Omega_i^k$ necessarily satisfy the compatibility equation, \cref{eq:Compatibility} (the proof of Theorem 1 of Ciarlet, Gratie, and C. Mardare \cite{ciarlet2008new} carries over to the present $p=2$ case).
  Furthermore, it is straightforward to see from the estimates in \cref{sub:RegularityCoefficients} that the sequence $\{\Omega_i^k\}$ is uniformly bounded in $L^2_\loc$ and thus subsequentially weakly convergent to some limit $\Omega_i \in L^2_\loc(U, \so(3))$.   By \cref{lem:Compactness}, therefore, up to the choice of a subsequence, there exists a sequence $\{P^k\} \subset W^{1,2}_\loc(U, \SO(3))$ of solutions to the equation $\grad P^k = P^k \Omega^k$ such that $P^k \rightharpoonup P$ in $W^{1,2}_\loc$ as $k \to \infty$ and $\grad P = P \Omega$.
  Since $\partial_j \partial_i P = \partial_i \partial_j P$ we thus have that $\partial_j (P \Omega_i) = \partial_i (P \Omega_j)$, which shows after a short computation that the compatibility equation is satisfied by the weak limit $\Omega_i$.

  At the same time, the uniformly bounded sequences $\{(a_{ij})^k\}$, $\{(b_{ij})^k\}$ possess subsequences that are weakly convergent to some $(a_{ij})$, $(b_{ij})$ in $W^{1,2}_\loc$ and $L^2_\loc$, respectively.     They satisfy $(a_{ij}) \in W^{1,2}_\loc(U, \Symp(2)) \cap L^\infty_\loc(U, \Symp(2))$ and $(b_{ij}) \in L^2_\loc(U, \Sym(2))$ and the eigenvalues of $(a_{ij})$ are uniformly bounded from below in $U$.
  As a result, we have that $\Omega_i$ and the components of the connection form induced by $(a_{ij})$ and $(b_{ij})$ coincide.   Hence we obtain from \cref{thm:FTST} an immersion $\theta \in W^{2,2}_\loc(U, \R^3)$ with first and second fundamental forms $(a_{ij})$ and $(b_{ij})$, respectively.
  On the other hand, the given sequence $\{\theta^k\}$ must have a weakly convergent subsequence in $W^{2,2}_\loc$ with a weak limit $\bar{\theta}$, which coincides with the immersion $\theta$ due to the uniqueness of distributional limits. \end{proof}

\begin{bibdiv}
\begin{biblist}

\bib{chen2018global}{article}{
      author={Chen, Gui-Qiang~G.},
      author={Li, Siran},
       title={Global weak rigidity of the {Gauss--Codazzi--Ricci} equations and
  isometric immersions of {Riemannian} manifolds with lower regularity},
        date={2018},
     journal={J. Geom. Anal.},
      volume={28},
      number={3},
       pages={1957\ndash 2007},
}

\bib{ciarlet2008new}{article}{
      author={Ciarlet, Philippe~G.},
      author={Gratie, Liliana},
      author={Mardare, Cristinel},
       title={A new approach to the fundamental theorem of surface theory},
        date={2008},
     journal={Arch. Ration. Mech. Anal.},
      volume={188},
      number={3},
       pages={457\ndash 473},
}

\bib{clelland2017frenet}{book}{
      author={Clelland, Jeanne~N.},
       title={From {Frenet} to {Cartan}: {The} method of moving frames},
      series={Graduate Studies in Mathematics},
   publisher={American Mathematical Society},
     address={Providence, RI},
        date={2017},
      volume={178},
}

\bib{coifman1993compensated}{article}{
      author={Coifman, R.},
      author={Lions, P.-L.},
      author={Meyer, Y.},
      author={Semmes, S.},
       title={Compensated compactness and {Hardy} spaces},
        date={1993},
     journal={J. Math. Pures Appl.},
      volume={72},
      number={9},
       pages={247\ndash 286},
}

\bib{hartman1950fundamental}{article}{
      author={Hartman, Philip},
      author={Wintner, Aurel},
       title={On the fundamental equations of differential geometry},
        date={1950},
     journal={Amer. J. Math.},
      volume={72},
      number={4},
       pages={757\ndash 774},
}

\bib{laurain2018optimal}{article}{
      author={Laurain, Paul},
      author={Rivière, Tristan},
       title={Optimal estimate for the gradient of {Green's} function on
  degenerating surfaces and applications},
        date={2018},
     journal={Comm. Anal. Geom.},
      volume={26},
      number={4},
       pages={887\ndash 913},
}

\bib{mardare2003fundamental}{article}{
      author={Mardare, Sorin},
       title={The fundamental theorem of surface theory for surfaces with
  little regularity},
        date={2003},
     journal={J. Elasticity},
      volume={73},
      number={1--3},
       pages={251\ndash 290},
}

\bib{mardare2005pfaff}{article}{
      author={Mardare, Sorin},
       title={On {Pfaff} systems with {$L^p$} coefficients and their
  applications in differential geometry},
        date={2005},
     journal={J. Math. Pures Appl.},
      volume={84},
      number={12},
       pages={1659\ndash 1692},
}

\bib{mardare2007systems}{article}{
      author={Mardare, Sorin},
       title={On systems of first order linear partial differential equations
  with {$L^p$} coefficients},
        date={2007},
     journal={Adv. Differential Equations},
      volume={12},
      number={3},
       pages={301\ndash 360},
}

\bib{mardare2008poincare}{article}{
      author={Mardare, Sorin},
       title={On {Poincaré} and de {Rham's} theorems},
        date={2008},
     journal={Rev. Roumaine Math. Pures Appl.},
      volume={53},
      number={5--6},
       pages={523\ndash 541},
}

\bib{muller2009boundary}{article}{
      author={Müller, Frank},
      author={Schikorra, Armin},
       title={Boundary regularity via {Uhlenbeck--Rivière} decomposition},
        date={2009},
     journal={Analysis (Munich)},
      volume={29},
      number={2},
       pages={199\ndash 220},
}

\bib{riviere2007conservation}{article}{
      author={Rivière, Tristan},
       title={Conservation laws for conformally invariant variational
  problems},
        date={2007},
     journal={Invent. Math.},
      volume={168},
      number={1},
       pages={1\ndash 22},
}

\bib{schikorra2010remark}{article}{
      author={Schikorra, Armin},
       title={A remark on gauge transformations and the moving frame method},
        date={2010},
     journal={Ann. Inst. H. Poincaré Anal. Non Linéaire},
      volume={27},
      number={2},
       pages={503\ndash 515},
}

\bib{wente1969existence}{article}{
      author={Wente, Henry~C.},
       title={An existence theorem for surfaces of constant mean curvature},
        date={1969},
     journal={J. Math. Anal. Appl.},
      volume={26},
      number={2},
       pages={318\ndash 344},
}

\bib{wente1975differential}{article}{
      author={Wente, Henry~C.},
       title={The differential equation {$\Delta x = 2H(x_u \wedge x_v)$} with
  vanishing boundary values},
        date={1975},
     journal={Proc. Amer. Math. Soc.},
      volume={50},
      number={1},
       pages={131\ndash 137},
}

\end{biblist}
\end{bibdiv}

\end{document}